\newtheorem{theorem}{Theorem}[section]
\newtheorem{definition}[theorem]{Definition}
\newtheorem{lemma}[theorem]{Lemma}
\newtheorem{remark}[theorem]{Remark}
\newtheorem{proposition}[theorem]{Proposition}
\numberwithin{equation}{section}
\renewcommand{\H}{{\mathcal H}}
\def\R{\mathbb R}
\def\R{\mathbb R}
\def\N{\mathbb N}
\def\de{\delta}
\def\la{\lambda}
\def\p{\mathfrak{p}}
\def\la{\lambda}
\def\N{\mathbb{N}}
\def\R{\mathbb{R}}
\def\nn{\nonumber}
\def\noop#1{\Vert #1\Vert_{\rm op}}
\def\R{{\mathbb R}}
\def\N{{\mathbb N}}
\def\HH{{\mathbb H}}
\def\B{{\mathcal B}}
\def\D{{\mathcal D}}
\def\F{{\mathcal F}}
\def\H{{\mathcal H}}
\def\hb#1{\hbox{#1}}
\def\no#1{\Vert #1\Vert }
\def\ker#1{\hb{ker}(#1)}
\def\res#1{_{\vert #1}}
\def\hb#1{\hbox{#1}}
\def\ker#1{\hb{ker}(#1)}
\def\L1#1{L^1(#1)}
\def\L#1#2{L^{#1}(#2)}
\def\lef({\left(}
\def\rig){\right)}
\begin{document}
\title{ The  $C^*$-algebra of the Cartan motion groups }
\author{Regeiba Hedi \and Rahali Aymen}
\address{ Universit\'{e} de Sfax, Facult\'{e} des Sciences Sfax,  BP
1171, 3038 Sfax, Tunisia.} \email{aymenrahali@yahoo.fr}
\address{ Universit\'{e} de Sfax, Facult\'{e} des Sciences Sfax,  BP
1171, 3038 Sfax, Tunisia\\
Université de Gabés
Facult\'e des Sciences de Gab\'es
Cit\'e Erriadh 6072 Zrig Gab\'es Tunisie.
} \email{rejaibahedi@gmail.com}
 \maketitle{}
\begin{abstract}
Let $G_0=K\ltimes\p$ be the Cartan motion groups. Under some assumption on $G_0,$ we describe the $C^*$-algebra
$C^*(G_0)$ of $G_0$ in terms of operator fields.

\end{abstract}
\section{\bf{Introduction}}\label{sec:1}
Let $G$ be a locally compact group. We denote by $\widehat{G}$ the unitary dual of $G.$ It well-known
that $\widehat{G}$ equipped with the Fell topology (see \cite{Mac1,Mac2}). The first representation-theoretic question concerning
the group $G$ is the full parametrezation and topological identification of the dual $\widehat{G}.$ The $C^*$-algebra
$C^*(G)$ is the completion of the convolution algebra $L^1(G)$ equipped with the $C^*$-norm $\|.\|_{C^*(G)}$,
given by $$\|f\|_{C^*(G)}:=\sup_{\pi\in\widehat{G}}\|\pi(f)\|_{op}.$$ We denote by  $\widehat{C^*(G)}$ the unitary dual of the
$C^*$-algebra of $G.$ Then we have the following bijection $$\widehat{C^*(G)}\simeq\widehat{G}.$$ Furthermore,
the $C^*$-algebra $C^*(G)$ of $G$ can be identified with a subalgebra of the large $C^*$-algebra $\ell^\infty(\widehat{G})$ of
bounded operator fields given by\\
{\footnotesize
 $$ \ell^\infty(\widehat{G}) :=
  \Big\{F:\widehat{G}\longrightarrow\bigcup_{\pi\in\widehat{G}}\mathcal{B}(\mathcal{H}_\pi),\,\pi\longmapsto
  F(\pi)\in\mathcal{B}(\mathcal{H}_\pi);\,\|F\|_\infty:=\sup_{\pi\in\widehat{G}}\|F(\pi)\|_{op}<\infty\Big\}$$}
under the Fourier transform $\mathcal{F}$ defined on $C^*(G)$ as follows:
$$\mathcal{F}(f)(\pi)=\pi(f),\,\, \pi\in\widehat{G},\, f\in C^*(G).$$ Using the fact that $\mathcal{F}$
is an injective homomorphism of $C^*(G)$ into $\ell^\infty(\widehat{G}),$ then the $C^*$-algebra $C^*(G)$ is isomorphic to
a subalgebra $\mathcal{D}:=\mathcal{F}(C^*(G))$ of elements in $\ell^\infty(\widehat{G})$ verifying some conditions.
The elements of $\mathcal{D}$ must naturally fulfil is that of continuity. Then the parametrization and the descreption of
the topology of $\widehat{G}$ are required to describe the $C^*$-algebra $C^*(G)$ of $G.$\\
In this context, we have some works in the literature , for example,
J. Ludwig and L. Turowska have described in \cite{Lud-Tur} the $C^*$-algebra of the Heisenberg group and of
the thread-like Lie groups in terms of an algebra of operator fields defined over their dual spaces.
The descreption of the $C^*$-algebra of the Euclidean motion group
$M_n:=SO(n)\ltimes\mathbb{R}^n,\,n\in\mathbb{N}^*$ was established in \cite{Lud-Ell-Abd}.
In the present work, we give a similar precise descreption of the $C^*$-algebra of the Cartan motion groups.
Our result is a generalization of analogous results in the case of the Euclidean motion group (see \cite{Lud-Ell-Abd}).\\
The paper is organized as follows.
In section \ref{sec:2}, we introduce the groups $G_0=K\ltimes \p$,
the semi-direct product  of the maximal compact connected subgroup $K $ of same connected semisimple Lie group with finite center $G$,
acting by adjoint action on $\p$ (where $\p$ determined by the Cartan decomposition of Lie algebra of the group $G$).
We recall the topology of the spectrum of the groups $G_0 $ and we determine the convergence in $\widehat{G_0}$.
 In the last section,  we determine the Fourier transform of their group $C^*$-algebras and
we describe the elements of the image of the Fourier transform of $C^*(G_0)$ inside the big algebra $\ell^\infty(\widehat{G_0}).$
  This is the main result of the paper

\section{\bf{The Cartan motion groups $G_0$.}}\label{sec:2}
Let $G$ be a connected semisimple Lie group with finite center and $K$ a maximal compact connected
subgroup of $G.$ Let $\mathfrak{g}=\mathfrak{k}\oplus\mathfrak{p}$ be the corresponding Cartan decomposition of
the Lie algebra $\mathfrak{g}$ of $G$ with $\mathfrak{k}:=Lie(K).$ Then one can form the semidirect product $G_0:=K\ltimes\mathfrak{p}$
with respect to the adjoint action of $K$ on $\mathfrak{p}.$ The group $G_0$ is called the Cartan motion group associated to the
Riemannian symmetric pair $(G,K).$ The multiplication in this group is given by
$$(k_{1},X_{1})\cdot(k_{2},X_{2})=(k_{1}k_{2},X_{1}+Ad(k_{1})X_{2}),\,\, (k_1,k_2)\in K, (X_1,X_2)\in\mathfrak{p}.$$
The group $M_{n}=SO(n)\ltimes\mathbb{R}^{n}$ is an example of Cartan motion groups. More precisely, $M_{n}$ is
the Cartan motion group associated to the compact Riemannian symmetric pair $(SO(n+1),SO(n))$.\vspace{0,2cm}\\
Let now $\mathfrak{a}$ be a maximal abelian subspace of $\mathfrak{p}$. The dimension of the real vector space $\mathfrak{a}$
is called the rank of the Riemannian symmetric pair $(G,K)$. An important fact worth mentioning here is that every adjoint orbit
of $K$ in $\mathfrak{p}$ intersects $\mathfrak{a}$ (see \cite{Hel}  p.\:247 ). Let $N_{K}(\mathfrak{a})$ and $Z_{K}(\mathfrak{a})$
denote respectively the normalizer and centralizer of $\mathfrak{a}$ in $K.$ The quotient group $$W:=N_{K}(\mathfrak{a})/Z_{K}(\mathfrak{a})$$
is called the Weyl group of the pair $(G,K)$. We shall denote the action of $W$ on $\mathfrak{a}$ by $H\longmapsto s.H$ for $H\in\mathfrak{a}$
and $s\in W$.\vspace{0,1cm}
Let us take the subspaces $\tilde{\mathfrak{a}}:=i\mathfrak{a}$, $\tilde{\mathfrak{p}}:=i\mathfrak{p}$ and
$\tilde{\mathfrak{g}}:=\mathfrak{k}\oplus\tilde{\mathfrak{p}}$ of the complexification $\mathfrak{g}^{\mathbb{C}}$ of $\mathfrak{g}$.
An element $H\in\mathfrak{a}$ is called regular if $\alpha(H)\neq0$ for all $\alpha\in\Sigma$ where $\Sigma$ is the set of all
restricted roots associated to the pair $(\mathfrak{g},\mathfrak{a}).$
A connected component of the set of regular elements in $\mathfrak{a}$ is called a Weyl chamber of the pair $(G,K)$.
Endow the dual space $\tilde{\mathfrak{a}}^{*}$ with a lexicographic ordering and denote by $\Sigma^{+}$ the set of positive restricted roots.
As an example of Weyl chambers, let us set $C^{+}(\mathfrak{a}):=i\,\widetilde{C}^{+}(\tilde{\mathfrak{a}})$ with
$$\widetilde{C}^{+}(\tilde{\mathfrak{a}})=\lbrace\widetilde{H}\in\tilde{\mathfrak{a}};\:\alpha(\widetilde{H})>0,\:
\forall\alpha\in\Sigma^{+}\rbrace.$$
It is well-known that every $s\in W$ permutes the Weyl chambers and that $W$ acts simply transitively on the set of Weyl chambers.
Furthermore, we have the following important result (see \cite{Hel}, p.\:322):
\begin{proposition}\label{prop1} Let $C\subset\mathfrak{a}$ be a Weyl chamber. Each orbit of $W$ in $\mathfrak{a}$ intersects the
closure $\overline{C}$ in exactly one point.
\end{proposition}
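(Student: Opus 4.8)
The plan is to prove that the closure $\overline{C}$ is a fundamental domain for the $W$-action on $\mathfrak{a}$, which splits into two assertions: every $W$-orbit meets $\overline{C}$ (\emph{existence}), and it meets $\overline{C}$ in at most one point (\emph{uniqueness}). The essential tool is a $W$-invariant Euclidean structure on $\mathfrak{a}$. Since $\mathfrak{g}=\mathfrak{k}\oplus\mathfrak{p}$ is a Cartan decomposition, the Killing form $B$ of $\mathfrak{g}$ is positive definite on $\mathfrak{p}$, so its restriction to $\mathfrak{a}\subset\mathfrak{p}$ is an inner product; because $W$ is realized through elements of $N_{K}(\mathfrak{a})$ acting by $\mathrm{Ad}$, and $\mathrm{Ad}(k)$ preserves $B$, this inner product is $W$-invariant. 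I will also use the standard structure-theoretic facts that $W$ is finite, that for each $\alpha\in\Sigma$ the orthogonal reflection $s_{\alpha}$ in the wall $\alpha^{-1}(0)$ lies in $W$, and that these reflections generate $W$.

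For existence, I would fix a point $H_{0}$ in the open chamber $C$, so that $\alpha(H_{0})>0$ for every $\alpha\in\Sigma^{+}$. Given $H\in\mathfrak{a}$, the orbit $W\cdot H$ is finite, so I may choose $H'=s\cdot H$ maximizing the functional $X\mapsto B(X,H_{0})$ over the orbit. I claim $H'\in\overline{C}$. Suppose not, so $\alpha(H')<0$ for some $\alpha\in\Sigma^{+}$. Writing the reflection as $s_{\alpha}X=X-\frac{2\alpha(X)}{B(t_{\alpha},t_{\alpha})}\,t_{\alpha}$, where $t_{\alpha}\in\mathfrak{a}$ represents $\alpha$ via $B$ (that is, $\alpha(X)=B(X,t_{\alpha})$), a direct computation gives $B(s_{\alpha}H',H_{0})=B(H',H_{0})-\frac{2\alpha(H')\alpha(H_{0})}{B(t_{\alpha},t_{\alpha})}$. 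Since $\alpha(H')<0$, $\alpha(H_{0})>0$ and $B(t_{\alpha},t_{\alpha})>0$, the subtracted term is negative, so $B(s_{\alpha}H',H_{0})>B(H',H_{0})$, contradicting maximality. Hence $\alpha(H')\ge 0$ for all $\alpha\in\Sigma^{+}$, i.e. $H'\in\overline{C}$.

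For uniqueness, suppose $H,H'\in\overline{C}$ satisfy $H'=s\cdot H$ with $s\in W$, and argue by induction on the length $\ell(s)$ of $s$ as a word in the simple reflections. If $\ell(s)=0$ then $s$ is trivial and $H=H'$. If $\ell(s)>0$, choose a simple root $\alpha$ with $\ell(s_{\alpha}s)<\ell(s)$; by the standard length criterion this forces $s^{-1}\alpha$ to be a negative root. Using $\alpha(s\cdot H)=(s^{-1}\alpha)(H)$ together with $H\in\overline{C}$ (so every negative root is $\le 0$ on $H$) and $H'\in\overline{C}$ (so $\alpha(H')\ge 0$), I obtain $\alpha(H')=0$, whence $s_{\alpha}$ fixes $H'$. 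Therefore $H'=s_{\alpha}\cdot H'=(s_{\alpha}s)\cdot H$ with $\ell(s_{\alpha}s)<\ell(s)$, and the inductive hypothesis yields $H=H'$.

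Combining the two assertions gives the proposition. I expect the uniqueness step to be the main obstacle, since it rests on the finer combinatorics of $W$ — generation by simple reflections and the length-reduction criterion $\ell(s_{\alpha}s)<\ell(s)\Leftrightarrow s^{-1}\alpha<0$ — and in particular must accommodate singular points $H$ lying on several walls simultaneously. The existence step, by contrast, is a soft maximization argument once the $W$-invariant inner product $B$ is in place.
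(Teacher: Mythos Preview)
Your argument is correct and is the classical proof that $\overline{C}$ is a fundamental domain for the $W$-action: a maximization of a linear functional over the finite orbit for existence, and induction on the length of $s$ using the exchange criterion $\ell(s_{\alpha}s)<\ell(s)\Leftrightarrow s^{-1}\alpha<0$ for uniqueness. Both steps are carried out accurately; in particular, the identity $\alpha(s\cdot H)=(s^{-1}\alpha)(H)$ and the handling of singular $H$ via $\alpha(H')=0\Rightarrow s_{\alpha}H'=H'$ are fine.

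There is nothing to compare against in the paper: the proposition is stated without proof and attributed to Helgason, \emph{Differential Geometry, Lie Groups and Symmetric Spaces}, p.~322. Your proof is precisely the standard one found in that reference (and in most textbooks on root systems), so you have simply supplied what the paper chose to cite.
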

We shall briefly review the description of the unitary dual of $G_{0}$ via Mackey's little group theory.
Let $\varphi$ be a non-zero linear form on $\mathfrak{p}$. We denote by $\chi_{\varphi}$ the unitary character of the vector Lie group
$\mathfrak{p}$ given by $\chi_{\varphi}=e^{i\varphi}$. Let $K_{\varphi}$ be the stabilizer of $\varphi$ under the coadjoint action of
$K$ on $\mathfrak{p}^{*}$, and let $\rho$ be an irreducible unitary representation of $K_{\varphi}$ on some Hilbert space $\mathcal{H}_{\rho}$.
The map
$$\rho\otimes\chi_{\varphi}: (k,X)\longmapsto e^{i\varphi(X)}\rho(k)$$
is a representation of the semidirect product $K_{\varphi}\ltimes\mathfrak{p}$, which we may induce up so as to obtain a unitary
representation of $G_{0}$. Let $L_{\rho}^{2}(K,\mathcal{H}_{\rho})$ be the subspace of $L^{2}(K,\mathcal{H}_{\rho})$ consisting of
the maps $f$ which satisfy the covariance condition
$$f(kk_{0})=\rho(k_{0}^{-1})f(k)$$
for $k_{0}\in K_{\varphi}$ and $k\in K$. The induced representation
$$\pi_{(\rho,\varphi)}:=Ind_{K_{\varphi}\ltimes\mathfrak{p}}^{\,G_{0}}(\rho\otimes\chi_{\varphi})$$
is realized on $\mathcal{H}_{\rho,\varphi}:=L_{\rho}^{2}(K,\mathcal{H}_{\rho})$ by
$$\pi_{(\rho,\varphi)}(k_{0},X)f(k)=e^{i\varphi(Ad(k^{-1})X)}f(k_{0}^{-1}k),$$
where $(k_{0},X)\in G_{0}$, $f\in L_{\rho}^{2}(K,\mathcal{H}_{\rho})$ and $k\in K$.
Mackey's theory tells us that the representation $\pi_{(\rho,\varphi)}$ is irreducible and that every infinite dimensional
irreducible unitary representation of $G_{0}$ is equivalent to some $\pi_{(\rho,\varphi)}$.
Furthermore, two representations $\pi_{(\rho,\varphi)}$ and $\pi_{(\rho^{\,'},\varphi^{\,'})}$
are equivalent if and only if $\varphi$ and $\varphi^{\,'}$ lie in the same coadjoint orbit of $K$ and
the representations $\rho$ and $\rho^{\,'}$ are equivalent under the identification of the conjugate
subgroups $K_{\varphi}$ and $K_{\varphi^{\,'}}$. In this way, we obtain all irreducible representations
of $G_{0}$ which are not trivial on the normal subgroup $\mathfrak{p}$. On the other hand, every irreducible unitary
representation $\tau$ of $K$ extends trivially to an irreducible representation, also denoted by $\tau$, of
$G_{0}$ by $\tau(k,X):=\tau(k)$ for $k\in K$ and $X\in\mathfrak{p}$.\vspace{0,1cm}

Next, we shall provide a more precise description of the so-called ``generic irreducible unitary representations" of $G_{0}$.
Denote again by $\langle\:,\:\rangle$ the restriction to $\mathfrak{p}\times\mathfrak{p}$ of the $Ad(K)$-invariant scalar product
$\langle\:,\:\rangle$ on $\mathfrak{g}_{0}$. Let $\mathfrak{a}$ be a maximal abelian subspace of $\mathfrak{p}$, and let $M$ be the centralizer
of $A=exp_{{}_{G}}(\mathfrak{a})$ in $K$. In general, the compact Lie group $M$ is not connected, and one can prove that
$M=M_{e}\cdot(M\cap A)$ with $M_{e}$ being the identity component of $M$. A proof of the following well-known result can be found in \cite{Ben-Rah}.
\begin{lemma}\label{lem1}
Let $C\subset\mathfrak{a}$ be a Weyl chamber. Every adjoint orbit of $K$ in $\mathfrak{p}$ intersects the closure
$\overline{C}$ in exactly one point.
\end{lemma}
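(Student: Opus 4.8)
The plan is to deduce this lemma from Proposition \ref{prop1} together with the fact, recalled above from \cite{Hel}, that every adjoint orbit of $K$ in $\mathfrak{p}$ meets $\mathfrak{a}$. The bridge between the two statements is the claim that the intersection of a single $K$-orbit with $\mathfrak{a}$ is exactly one orbit of the Weyl group $W$. Granting this, both existence and uniqueness follow quickly, and the real work is concentrated in establishing the claim.

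First I would settle existence. Let $\mathcal{O} = Ad(K)X$ be an adjoint orbit of $K$ in $\mathfrak{p}$. By the cited result, $\mathcal{O}$ meets $\mathfrak{a}$ in some point $H$. By Proposition \ref{prop1}, the $W$-orbit of $H$ meets $\overline{C}$ in a point $H_0$. Since every $s \in W$ is realised by an element of $N_{K}(\mathfrak{a}) \subset K$ and $H \in \mathcal{O}$, the whole $W$-orbit of $H$ is contained in the $K$-invariant set $\mathcal{O}$; hence $H_0 \in \mathcal{O} \cap \overline{C}$, and the intersection is nonempty.

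For uniqueness, suppose $H_1, H_2 \in \overline{C}$ both lie in $\mathcal{O}$, so $H_2 = Ad(k)H_1$ for some $k \in K$. The key claim asserts that $H_1$ and $H_2$ then lie in a common $W$-orbit, whereupon the uniqueness part of Proposition \ref{prop1} forces $H_1 = H_2$. To prove the claim I would argue as follows. Set $\mathfrak{g}' := \{Y \in \mathfrak{g} : [Y, H_2] = 0\}$, the centralizer of $H_2$ in $\mathfrak{g}$; it is stable under the Cartan involution and reductive, with induced decomposition $\mathfrak{g}' = (\mathfrak{g}' \cap \mathfrak{k}) \oplus (\mathfrak{g}' \cap \mathfrak{p})$. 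Both $\mathfrak{a}$ and $Ad(k)\mathfrak{a}$ are abelian subspaces of $\mathfrak{p}$ containing $H_2$ (the first because $H_2 \in \mathfrak{a}$, the second because $Ad(k)H_1 = H_2$), hence both lie in $\mathfrak{g}' \cap \mathfrak{p}$ and, being maximal abelian already in $\mathfrak{p}$, are maximal abelian in $\mathfrak{g}' \cap \mathfrak{p}$. Invoking the conjugacy of maximal abelian subspaces of the $\mathfrak{p}$-part under the identity component of the centralizer $Z_{K}(H_2)$, I obtain $k' \in Z_{K}(H_2)_{e}$ with $Ad(k')Ad(k)\mathfrak{a} = \mathfrak{a}$. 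Then $u := k'k$ normalizes $\mathfrak{a}$, so it represents some $s \in W$, while $Ad(u)H_1 = Ad(k')H_2 = H_2$ because $k'$ fixes $H_2$; thus $s \cdot H_1 = H_2$, proving the claim.

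The main obstacle is precisely this conjugacy statement inside the reductive centralizer $\mathfrak{g}'$: one must know that two maximal abelian subspaces of $\mathfrak{g}' \cap \mathfrak{p}$ are conjugate under $Z_{K}(H_2)_{e}$, and that the conjugating element may be taken to fix $H_2$. This is the analogue, for the reductive pair $(\mathfrak{g}', \mathfrak{g}' \cap \mathfrak{k})$, of the standard conjugacy theorem for maximal abelian subspaces of a Cartan decomposition, and some care is warranted since $\mathfrak{g}'$ is only reductive and, as noted above, $M = Z_{K}(A)$ need not be connected. Once this point is secured, the remaining ingredients—nonemptiness from \cite{Hel}, p.\:247, and Proposition \ref{prop1}—are already in place, and the lemma follows.
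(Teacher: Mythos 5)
Your proof is correct, but it is worth noting that the paper itself does not prove this lemma at all: it simply defers to \cite{Ben-Rah}, where the result is established. What you have written is a genuine, essentially self-contained proof, and it is the standard one: existence follows from the fact that every $Ad(K)$-orbit meets $\mathfrak{a}$ together with Proposition \ref{prop1} and the observation that $W$ is realised inside $K$ via $N_{K}(\mathfrak{a})$; uniqueness reduces to the key claim that $K$-conjugacy of two elements of $\mathfrak{a}$ implies $W$-conjugacy, which you prove by the classical centralizer argument (both $\mathfrak{a}$ and $Ad(k)\mathfrak{a}$ are maximal abelian in the $\mathfrak{p}$-part of the reductive centralizer $\mathfrak{g}'$ of $H_2$, and are therefore conjugate under $Z_{K}(H_2)_{e}$). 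The one ingredient you leave unproved — conjugacy of maximal abelian subspaces of $\mathfrak{g}'\cap\mathfrak{p}$ under the identity component of the isotropy group — is a standard theorem (it follows from the semisimple case applied to $[\mathfrak{g}',\mathfrak{g}']$, since every maximal abelian subspace of $\mathfrak{g}'\cap\mathfrak{p}$ contains the central part $\mathfrak{z}(\mathfrak{g}')\cap\mathfrak{p}$), so invoking it is legitimate rather than a gap; this is exactly how the analogous statement is proved in the textbook literature. Your caution about the disconnectedness of $M=Z_{K}(\mathfrak{a})$ is in fact unnecessary: the conjugating element is only needed in the identity component $Z_{K}(H_2)_{e}$, and \emph{every} element of $Z_{K}(H_2)$, connected to the identity or not, fixes $H_2$ by definition, so no delicacy arises at that point. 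In short, your route supplies the content that the paper outsources to its reference, at the cost of half a page; the paper's route buys brevity at the cost of self-containedness.
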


We conclude that every infinite dimensional unitary representation of $G_{0}$ has the form $\pi_{(\rho,\varphi_{_{H}})}$,
where $H$ is a non-zero vector in $\overline{C^{+}(\mathfrak{a})}$ and $\varphi_{{}_{H}}$ is the linear form on $\mathfrak{p}$
given by $\varphi_{{}_{H}}(X)=\langle H,X\rangle$. Observe that the isotropy group $K_{\varphi_{{}_{H}}}$ coincides with the centralizer
$Z_{K}(H)$. Let us fix a regular element $H$ in $\mathfrak{a}$. The subgroups $K_{\varphi_{{}_{H}}}$ and $M$ of $K$ are identical.
If $\rho$ is an irreducible representation of $M$, then the representation $\pi_{(\rho,\varphi_{{}_{H}})}$ corresponding
to the pair $(\rho,\varphi_{{}_{H}})$ is said to be generic. We denote by $\Gamma_0$ the set of all equivalence classes of generic irreducible
unitary representations of $G_{0}$. Notice that $\Gamma_0$ has full Plancherel measure in the unitary dual $\widehat{G_{0}}$.
Applying Mackey's analysis and the result of Lemma \ref{lem2}, we obtain the bijection
$$\Gamma_0\simeq \widehat{M}\times C^{+}(\mathfrak{a}).$$
 Note that when the Riemannian symmetric pair $(G,K)$ has rank one, we can find a unit vector $H_{0}\in\mathfrak{a}$ such that $C^{+}(\mathfrak{a})=\mathbb{R}_{+}^{*}H_{0}$. In this case we have the bijections
$$\Gamma_0\simeq \widehat{M}\times\mathbb{R}_{+}^{*}\:\:\:\text{and}\:\:\:\widehat{G_{0}}\simeq \Gamma_0\sqcup \Gamma_2$$ where $\Gamma_2:=\widehat{K}.$
Now, we define the subset $\Gamma_1\subset\widehat{G_0}$ as follows $$\Gamma_1:=\Big\{\pi_{(\mu,H)}=Ind_{K_{\varphi_H}\ltimes\mathfrak{p}}^{K\ltimes\mathfrak{p}}(\rho\otimes\chi_{\varphi_H}),\,\,\rho\in\widehat{K_{\varphi_H}},\,H\in \partial(C^+(\mathfrak{a}))\backslash\{0\}\Big\}$$ where $\partial(C^+(\mathfrak{a}))$ is the boundary of $C^+(\mathfrak{a}).$ According to Mackey's theory, we obtain the following parametrization as sets $$\widehat{G_0}\simeq \Gamma_0\sqcup\Gamma_1\sqcup\Gamma_2.$$
We denote by $\mathfrak{z}$ the orthogonal complement of $\mathfrak{a}$ in $\mathfrak{p}$ ($\mathfrak{p}=\mathfrak{a}\oplus\mathfrak{z}$). Let $\Lambda:\mathfrak{a}\longrightarrow\mathbb{C}$ be a real linear function. Also we denote by $\Lambda$ the extension of $\Lambda$ to $\mathfrak{p}$ so that $\mathfrak{z}\subseteq Ker(\Lambda),$ and let $\rho\in\widehat{K_\Lambda}.$ We denote by $\pi:=\pi_{(\rho,\Lambda)}$ the representation of $G_0$ induced from \begin{eqnarray*}
  K_\Lambda\ltimes\mathfrak{p}&\longrightarrow& \mathcal{L}(E_\rho) \\
  (k,X) &\longmapsto& e^{\sqrt{-1}\Lambda(X)}\rho(k).
 \end{eqnarray*}
 Now, we describe the Fell topology on $\widehat{G_0}.$ For $\beta_1, \beta_2\in\mathfrak{a}^*$ (the dual vector space of $\mathfrak{a}$), define $$|\beta_1+\sqrt{-1}\beta_2|^2=B(\beta_1,\beta_1)+B(\beta_2,\beta_2)$$ where $B$ is the Cartan-Killing form on $\mathfrak{g}_0.$
Let $\mathcal{F}_c$ be the set of all pairs $(\rho,\Lambda)$ where $\rho\in\widehat{K}_\Lambda.$
 We take $(\rho,\Lambda)\in\mathcal{F}_c,$ if $\varepsilon>0$ is sufficiently small then $|\Lambda-\Lambda^{'}|<\varepsilon$ implies $K_{\Lambda^{'}}\subseteq K_\Lambda.$
 So the subset $$\mathcal{U}:=\Big\{(\rho^{'},\Lambda^{'})\in\mathcal{F}_c:\,|\Lambda-\Lambda^{'}|<\varepsilon\,\,and\,\,[\rho\big|_{K_{\Lambda^{'}}}:\rho^{'}]>0\Big\}$$ ($[\rho\big|_{K_{\Lambda^{'}}}:\rho^{'}]$ is the multiplicity of $\rho^{'}$ in $\rho\big|_{K_\Lambda}$)
defines a basis for the neighborhoods of $(\rho,\Lambda)$ in the topology we give $\mathcal{F}_c$ (see \cite{Rad}).
Note that $W$ acts on $\mathcal{F}_c$ by $$w.(\rho,\Lambda)=(w.\rho,w.\Lambda).$$ Let $\mathcal{F}_c/W$
be the quotient space by this action of $W,$ equipped with the quotient topology. Now,
let $$\mathcal{F}:=\Big\{(\rho,\Lambda)\in\mathcal{F}_c:\,\Lambda=\sqrt{-1}\beta\,\,where\,\,\beta\,\,is\,\,real\,\,valued\Big\}.$$
According to \cite{Rad}, then we have the useful Lemma.
\begin{lemma}\label{lem2}
The unitary dual $\widehat{G_0}$ of $G_0$ is homeomorphic to $\mathcal{F}/W.$
\end{lemma}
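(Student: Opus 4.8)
The plan is to obtain the homeomorphism in two stages: first the underlying bijection of sets, coming from Mackey's little group method, and then the identification of topologies. First I would record the bijection. By the Mackey description recalled above, every class in $\widehat{G_0}$ is represented either by some $\pi_{(\rho,\varphi)}$, nontrivial on $\mathfrak{p}$, or by a class in $\widehat{K}$, trivial on $\mathfrak{p}$. Using the $Ad(K)$-invariant inner product to identify a nonzero form $\varphi$ on $\mathfrak{p}$ with a vector of $\mathfrak{p}$, Lemma \ref{lem1} together with Proposition \ref{prop1} shows that its $K$-orbit meets $\overline{C^+(\mathfrak{a})}$ in exactly one point and meets $\mathfrak{a}$ in a single $W$-orbit. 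Writing the attached form in the normalized shape $\Lambda=\sqrt{-1}\beta$ ($\beta$ real) used to define $\mathcal{F}$, and extending $\Lambda$ to $\mathfrak{p}$ trivially on $\mathfrak{z}$, every such representation takes the form $\pi_{(\rho,\Lambda)}$ with $(\rho,\Lambda)\in\mathcal{F}$, the value $\Lambda=0$ recovering $\widehat{K}$. This yields a surjection $\mathcal{F}\longrightarrow\widehat{G_0}$.

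Next I would check that two pairs produce equivalent representations precisely when they lie in one $W$-orbit. By Mackey's equivalence criterion, $\pi_{(\rho,\Lambda)}\simeq\pi_{(\rho',\Lambda')}$ if and only if $\Lambda$ and $\Lambda'$ lie in the same $K$-coadjoint orbit and $\rho,\rho'$ correspond under the identification of the conjugate stabilizers $K_\Lambda$ and $K_{\Lambda'}$. Two forms in $\overline{C^+(\mathfrak{a})}$ share a $K$-orbit exactly when they share a $W$-orbit, and the conjugation identifying the stabilizers is implemented by a representative in $N_K(\mathfrak{a})$ of the relevant $w\in W$; hence the criterion is exactly the relation $(\rho',\Lambda')=w.(\rho,\Lambda)$. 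The surjection therefore factors through a bijection $\mathcal{F}/W\simeq\widehat{G_0}$.

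It remains to promote this bijection to a homeomorphism, which is the substantive point. I would give $\mathcal{F}_c$ the topology generated by the neighborhood basis $\mathcal{U}$ of \cite{Rad}, equip $\mathcal{F}\subset\mathcal{F}_c$ with the subspace topology and $\mathcal{F}/W$ with the quotient topology; the $W$-action is by homeomorphisms since it is linear in the $\Lambda$-slot and compatible with the fibres $\widehat{K_\Lambda}$. For continuity I would use the continuity of the induction functor in the Fell topology: if $(\rho_n,\Lambda_n)\to(\rho,\Lambda)$ in $\mathcal{F}$, then $\Lambda_n\to\Lambda$ and, for $n$ large, $K_{\Lambda_n}\subseteq K_\Lambda$ with $\rho_n$ occurring in $\rho\big|_{K_{\Lambda_n}}$, and induction then forces $\pi_{(\rho_n,\Lambda_n)}\to\pi_{(\rho,\Lambda)}$ in $\widehat{G_0}$. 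For the inverse I would test Fell convergence through the diagonal matrix coefficients of $\pi_{(\rho,\Lambda)}$, which combine the inducing characters $e^{\sqrt{-1}\Lambda(Ad(k^{-1})X)}$ with the characters of $\rho$; their convergence pins down $\Lambda_n\to\Lambda$ modulo $W$ and, via Frobenius reciprocity, the branching inequality $[\rho\big|_{K_{\Lambda_n}}:\rho_n]>0$, which is exactly the condition defining $\mathcal{U}$.

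The hard part will be the behaviour at the singular strata $\Gamma_1$ and $\Gamma_2=\widehat{K}$, where the little group $K_\Lambda$ jumps in size. As $\Lambda_n\to\Lambda$ with $\Lambda$ more singular, the single class $\pi_{(\rho,\Lambda)}$ is approached by all families $\pi_{(\rho_n,\Lambda_n)}$ whose fibre datum $\rho_n$ appears in the restriction of $\rho$ to the smaller stabilizer $K_{\Lambda_n}$. Establishing that Fell convergence in $\widehat{G_0}$ is governed by \emph{exactly} this branching rule --- neither coarser nor finer than the quotient topology on $\mathcal{F}/W$ --- is the crux, and it is precisely here that the multiplicity condition $[\rho\big|_{K_{\Lambda'}}:\rho']>0$ in the definition of $\mathcal{U}$ carries the argument; for this delicate matching I would lean on the detailed analysis of \cite{Rad}, paralleling the treatment of the Euclidean motion group in \cite{Lud-Ell-Abd}.
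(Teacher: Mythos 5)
The first thing to note is that the paper does not prove this lemma at all: it is imported verbatim from Rader \cite{Rad}, the sentence ``According to \cite{Rad}, then we have the useful Lemma'' being the entire justification. So your proposal, which reconstructs a proof skeleton, already goes beyond what the paper offers, and the real comparison is with Rader's argument rather than with anything in the text.

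Your skeleton has the right shape, and the set-theoretic half is sound: Mackey's little group method, together with the fact that each $K$-orbit in $\mathfrak{p}^*\simeq\mathfrak{p}$ meets $\overline{C^+(\mathfrak{a})}$ in exactly one point and meets $\mathfrak{a}$ in a single $W$-orbit, gives the bijection $\mathcal{F}/W\simeq\widehat{G_0}$, with $\Lambda=0$ accounting for $\widehat{K}$. The one step you present as automatic but which is not is the continuity direction: ``induction then forces $\pi_{(\rho_n,\Lambda_n)}\to\pi_{(\rho,\Lambda)}$'' is not a direct application of Fell's continuity of induction, because that theorem concerns a fixed inducing subgroup, whereas here the subgroups $K_{\Lambda_n}\ltimes\mathfrak{p}$ vary with $n$ and jump in size precisely at the singular strata $\Gamma_1$ and $\Gamma_2$ --- the locus you yourself identify as the crux. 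To make this rigorous one either induces in stages through $K_\Lambda\ltimes\mathfrak{p}$ and runs a weak-containment argument for the inner inductions, or estimates matrix coefficients directly, which is what \cite{Rad} does (and what \cite{Lud-Ell-Abd} does for $SO(n)\ltimes\mathbb{R}^n$). Since you explicitly delegate this delicate matching to \cite{Rad}, your proposal is consistent with the paper's purely citational treatment; just be aware that, as a free-standing proof, the continuity step (like the converse direction via matrix coefficients) ultimately rests on the cited reference rather than on the sketch itself.
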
 In the remainder of this paper, we shall assume that the stabilizer $K_{\varphi}$ is connected for each $\varphi\in\mathfrak{p}^*.$ Let $\rho_{\mu}$ be an irreducible representation of $K_H$ with highest weight $\mu$. For simplicity,
we shall write $\pi_{(\mu,H)}$ and $\mathcal{H}_{(\mu,H)}$ instead of $\pi_{(\rho_{\mu},\varphi_{{}_{H}})}$ and
$\mathcal{H}_{\rho_\mu,\varphi_{H}}$ respectively.
We have:
\begin{proposition}\label{prop2} Let $(\pi_{(\mu^n,H_n)})_n$ be a sequence in $\Gamma_0.$ Then we have:
\begin{enumerate}
 \item The sequence $(\pi_{(\mu^n,H_n)})_n$ converges to $\pi_{(\mu,H)}$ in $\Gamma_0$ if and only if $(H_n)_n$ converges to $H$ and $\mu^n=\mu$ for $n$ large enough.
 \item The sequence $(\pi_{(\mu^n,H_n)})_n$ converges to $\pi_{(\mu,H)}$ in $\Gamma_1$ if and only if $(H_n)_n$ converges to $H$ and $[\rho_{\mu}\big|_M:\rho_{\mu^n}]>0$ for $n$ large enough.
 \item The sequence $(\pi_{(\mu^n,H_n)})_n$ converges to $\tau_\lambda$ in $\Gamma_2$ if and only if $(H_n)_n$ converges to $0$ and $[\tau_\lambda\big|_{M}:\rho_{\mu^n}]>0$ for $n$ large enough.
\end{enumerate}

\end{proposition}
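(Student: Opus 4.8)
The plan is to deduce everything from Lemma \ref{lem2}, which identifies $\widehat{G_0}$ with $\mathcal{F}/W$, together with the explicit neighborhood basis $\mathcal{U}$ recorded just before Lemma \ref{lem2}. The whole argument rests on two structural facts already available: first, that for $\Lambda'$ sufficiently close to $\Lambda$ one has $K_{\Lambda'}\subseteq K_\Lambda$, so that the restriction $\rho|_{K_{\Lambda'}}$ makes sense; and second, that a representation $\pi_{(\mu^n,H_n)}\in\Gamma_0$ is generic, i.e. $H_n$ is regular and its stabilizer $K_{\varphi_{H_n}}$ equals $M$. Combining these (and identifying $\Lambda$ with $H\in\mathfrak{a}$ through the inner product, so that $|\Lambda_n-\Lambda|<\varepsilon$ becomes $|H_n-H|<\varepsilon$), for any limit parameter $(\rho,H)$ and for $n$ large the restriction appearing in a basic neighborhood is always $\rho|_{K_{H_n}}=\rho|_M$. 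This single observation is what produces the three different multiplicity conditions, according to whether $\rho$ is irreducible over $M$ (case 1), over the larger stabilizer $K_H$ with $H\in\partial(C^+(\mathfrak{a}))\setminus\{0\}$ (case 2), or over $K$ itself when $H=0$ (case 3).

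First I would set up the translation. By Lemma \ref{lem2}, $(\pi_{(\mu^n,H_n)})_n$ converges to the representation with parameter $(\rho,H)$ precisely when, after lifting to $\mathcal{F}$ and accounting for the $W$-action, the pairs $(\rho_{\mu^n},H_n)$ eventually lie in every basic neighborhood $\mathcal{U}$ of $(\rho,H)$. Unwinding the definition of $\mathcal{U}$, this is equivalent to the conjunction of: (i) $|H_n-H|<\varepsilon$ for every $\varepsilon>0$ and $n$ large, i.e. $H_n\to H$; and (ii) $[\rho|_{K_{H_n}}:\rho_{\mu^n}]>0$ for $n$ large. Since each $H_n$ is regular I may substitute $K_{H_n}=M$ in (ii), turning it into $[\rho|_M:\rho_{\mu^n}]>0$. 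Each equivalence is then proved in both directions: the forward implication by feeding a shrinking family $\mathcal{U}_\varepsilon$ into the convergence hypothesis, and the converse by checking directly that (i) and (ii) force $(\rho_{\mu^n},H_n)\in\mathcal{U}_\varepsilon$ for all $\varepsilon$ and $n$ large.

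With this machinery in place the three cases are distinguished only by the nature of $H$, hence of $K_H$. In case 1 the limit is generic, so $H$ is regular and $K_H=M$; then $\rho=\rho_\mu$ is irreducible over $M$ and $[\rho_\mu|_M:\rho_{\mu^n}]=[\rho_\mu:\rho_{\mu^n}]>0$ forces $\rho_{\mu^n}\cong\rho_\mu$, i.e. $\mu^n=\mu$ for $n$ large, which is statement (1). In case 2, $H\in\partial(C^+(\mathfrak{a}))\setminus\{0\}$ and $M\subsetneq K_H$, so condition (ii) reads $[\rho_\mu|_M:\rho_{\mu^n}]>0$ with $\rho_\mu\in\widehat{K_H}$, which is exactly statement (2). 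In case 3, $H=0$, the stabilizer is all of $K$, the limit is $\tau_\lambda\in\widehat{K}=\Gamma_2$, and the conditions become $H_n\to 0$ together with $[\tau_\lambda|_M:\rho_{\mu^n}]>0$, which is statement (3).

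The step I expect to be the genuine obstacle is the careful handling of the quotient by $W$ in the forward direction, especially near degenerate limits. One must check that when $(\rho_{\mu^n},H_n)$ enters a basic neighborhood of $(\rho,H)$ only after applying some $w_n\in W$, this Weyl element can be absorbed harmlessly. For a regular limit $H$ (case 1), a small ball around $H$ meets $\overline{C^+(\mathfrak{a})}$ only inside the open chamber, so by Proposition \ref{prop1} the relevant $w_n$ fixes $H_n$ and leaves the restriction condition untouched. For $H$ on the boundary (case 2) or at the origin (case 3), where the $W$-stabilizer of $H$ is nontrivial, one must instead verify that the multiplicity condition $[\rho|_M:\rho_{\mu^n}]>0$ is stable under the residual $W$-action; this holds because $\rho$ is the restriction of a representation of the larger group $K_H$ (respectively $K$), whose restriction to $M$ is $W$-invariant. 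Once this $W$-bookkeeping is settled, the three equivalences follow directly from the neighborhood basis.
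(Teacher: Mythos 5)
Your proposal follows essentially the same route as the paper's proof: both translate convergence via the homeomorphism $\widehat{G_0}\simeq\mathcal{F}/W$ of Lemma \ref{lem2}, unwind the neighborhood basis $\mathcal{U}$ to get the two conditions $H_n\to H$ and $[\rho|_{K_{H_n}}:\rho_{\mu^n}]>0$, and then use regularity of the $H_n$ (so $K_{H_n}=M$) together with irreducibility over $M$ to reduce case (1) to $\mu^n=\mu$. Your treatment is in fact somewhat more complete than the paper's, which works out only the generic case in detail and leaves both the boundary/origin cases and the Weyl-group bookkeeping implicit, whereas you handle all three cases and verify that the residual $W$-action does not disturb the multiplicity condition.
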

\begin{proof}
By Lemma \ref{lem2}, we show that the map \begin{eqnarray*}
 \mathcal{F}/W &\longrightarrow& \widehat{G}_0 \\
  (\rho, \Lambda) &\longmapsto& \pi_{(\rho,\Lambda)}
  \end{eqnarray*}
is a homeomorphism (see  \cite{Rad}). Let $H$ be a non-zero vector in $\overline{C^+(\mathfrak{a})}$ and $\Lambda_H$ is the linear form on $\mathfrak{a}$ given by $$\Lambda_H(X)=\sqrt{-1}\langle H,X\rangle\,\,\,\forall X\in\mathfrak{a}.$$ For simplicity, we shall write $K_H$ instead of $K_{\Lambda_H}.$ Now, we assume that $(H_n)_{_n\in I}$ converges to $H$ and $[\rho_\mu\big|_{K_{H_n}}:\rho_{\mu^n}]>0$ for $n$ large enough. This is equivalent that the net $(\rho_{\mu^n},\Lambda_{H_n})_{n\in I}$ converges to $(\rho_\mu,\Lambda_H)$ in $\mathcal{F}/W.$ In view of the continuity of the map $\mathcal{F}/W \ni(\rho, \Lambda) \longmapsto \pi_{(\rho,\Lambda)}\in\widehat{G_0},$ we easily see that $(\pi_{(\mu^n,H_n)})_n$ converges to $\pi_{(\mu,H)}$ in $\widehat{G_0}.$\\
Conversely, assume that the net $(\pi_{(\mu^n,H_n)})_{n\in I}$ converges to $\pi_{(\mu,H)}$ in $\widehat{G_0}.$ Then the net $((\rho_{\mu^n},\Lambda_{H_n}))_{n\in I}$ converges to $(\rho_\mu,\Lambda_H)$ in $\mathcal{F}/W.$ Recall that the set
$$\mathcal{V}:=\Big\{(\rho^{'},\Lambda^{'})\in\mathcal{F}_c:\,|\Lambda_H-\Lambda^{'}|<\varepsilon\,\,and\,\,[\rho_\mu\big|_{K_{\Lambda^{'}}}:\rho^{'}]>0\Big\}$$ defines a basis for the neighborhoods of $(\rho_\mu,\Lambda_H).$ Hence for each $\varepsilon>0,$ there exists $n_0\in I$ such that $$\forall n\geq n_0,\,\,\, (\rho_{\mu^n},\Lambda_{H_n})\in \mathcal{V}.$$ i.e.; $\forall n\geq n_0,$ we have

 \begin{eqnarray*}
 |\Lambda_{H_n}-\Lambda_H| &<& \varepsilon
 \end{eqnarray*} and \begin{eqnarray*}
                      [\rho_\mu\big|_{K_{H_n}}:\rho_{\mu^n}] &>&0 .
                    \end{eqnarray*}
                    Then we obtain the following

 \begin{eqnarray*}
 H_n &\longrightarrow& H
 \end{eqnarray*} and \begin{eqnarray*}
                      [\rho_\mu\big|_{K_{H_n}}:\rho_{\mu^n}] &>&0 .
                    \end{eqnarray*} for $n$ large enough.
Notice that for $H, H_n\in C^+(\mathfrak{a}),$ $K_H=K_{H_n}=M$ and we get $$[\rho_\mu\big|_{K_{H_n}}:\rho_{\mu^n}] >0 \Longleftrightarrow  [\rho_\mu\big|_M:\rho_{\mu^n}] >0$$ for $n$ large enough, which is equivalent to $\mu^n=\mu$ for $n$ large enough. This completes the proof of the Proposition.
\end{proof}
\begin{proposition}\label{prop3} Let $(\pi_{(\mu^n,H_n)})_n$ be a sequence in $\Gamma_1.$ Then we have:
\begin{enumerate}
 \item The sequence $(\pi_{(\mu^n,H_n)})_n$ converges to $\pi_{(\mu,H)}$ in $\Gamma_1$ if and only if $(H_n)_n$ converges to $H$ and $[\rho_{\mu}\big|_{K_{H_n}}:\rho_{\mu^n}]>0$ for $n$ large enough.
 \item The sequence $(\pi_{(\mu^n,H_n)})_n$ converges to $\tau_\lambda$ in $\Gamma_2$ if and only if $(H_n)_n$ converges to $0$ and $[\tau_\lambda\big|_{K_{H_n}}:\rho_{\mu^n}]>0$ for $n$ large enough.
\end{enumerate}
\end{proposition}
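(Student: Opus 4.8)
The plan is to follow the same strategy as in the proof of Proposition \ref{prop2}, transporting the convergence question from $\widehat{G_0}$ to the parameter space $\mathcal{F}/W$ through the homeomorphism of Lemma \ref{lem2}, and then reading off the two conditions from the explicit neighborhood basis recalled just before Lemma \ref{lem2}. By that homeomorphism, the sequence $(\pi_{(\mu^n,H_n)})_n$ corresponds to the net of pairs $(\rho_{\mu^n},\Lambda_{H_n})_n$ in $\mathcal{F}$, with each $H_n\in\partial(C^+(\mathfrak{a}))\backslash\{0\}$; in part (1) the candidate limit $\pi_{(\mu,H)}$ corresponds to $(\rho_\mu,\Lambda_H)$ with $H\in\partial(C^+(\mathfrak{a}))\backslash\{0\}$, while in part (2) the limit $\tau_\lambda\in\Gamma_2=\widehat{K}$ corresponds to the pair $(\tau_\lambda,0)$, for which the stabilizer is all of $K$.

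First I would record the semicontinuity of stabilizers already noted in the text: for $\varepsilon$ small enough, $|\Lambda_H-\Lambda'|<\varepsilon$ forces $K_{\Lambda'}\subseteq K_H$, so that for $H_n$ close to $H$ (resp. to $0$) the branching multiplicity $[\rho_\mu\big|_{K_{H_n}}:\rho_{\mu^n}]$ (resp. $[\tau_\lambda\big|_{K_{H_n}}:\rho_{\mu^n}]$) is meaningful. For the forward implication I would then check directly that the stated hypotheses place $(\rho_{\mu^n},\Lambda_{H_n})$ inside every basic neighborhood
$$\mathcal{V}=\Big\{(\rho',\Lambda')\in\mathcal{F}_c:\,|\Lambda_H-\Lambda'|<\varepsilon\,\,\text{and}\,\,[\rho_\mu\big|_{K_{\Lambda'}}:\rho']>0\Big\}$$
of the limit (with $\rho_\mu,\Lambda_H$ replaced by $\tau_\lambda,0$ in part (2)) for $n$ large; continuity of $(\rho,\Lambda)\mapsto\pi_{(\rho,\Lambda)}$ then yields convergence of the representations. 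For the converse, convergence of $(\pi_{(\mu^n,H_n)})_n$ transports to convergence of $(\rho_{\mu^n},\Lambda_{H_n})_n$ in $\mathcal{F}/W$, and intersecting with the neighborhoods $\mathcal{V}$ returns, for $n$ large, both $|\Lambda_{H_n}-\Lambda_H|<\varepsilon$ --- equivalently $H_n\to H$ (resp. $H_n\to 0$) by the definition of $|\cdot|$ through the Cartan--Killing form --- and the multiplicity inequality.

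The main obstacle will be handling the $W$-action at the quotient level, since in contrast to Proposition \ref{prop2} both the sequence and the limit now sit on the boundary $\partial(C^+(\mathfrak{a}))$ (or at $0$), where points carry nontrivial $W$-stabilizers and the groups $K_{H_n}$, $K_H$ genuinely depend on the face. I would resolve this by invoking Lemma \ref{lem1} (each $W$-orbit meets $\overline{C^+(\mathfrak{a})}$ in exactly one point) to fix canonical representatives in the closed fundamental domain, so that convergence in $\mathcal{F}/W$ reduces to convergence of these representatives and no spurious $W$-translate interferes. This also accounts for why the conclusion here retains the branching inequality $[\rho_\mu\big|_{K_{H_n}}:\rho_{\mu^n}]>0$ rather than collapsing to the equality $\mu^n=\mu$ obtained in part (1) of Proposition \ref{prop2}: there one had $K_H=K_{H_n}=M$, whereas on the boundary $K_{H_n}$ may be strictly larger than $M$, and symmetrically $K_H$ is strictly larger than each $K_{H_n}$, so only a nonvanishing multiplicity can be expected.
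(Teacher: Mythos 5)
Your proposal is correct and takes essentially the same approach as the paper: the paper's entire proof of this proposition is the single line ``Applying the same arguments as in the proof of Proposition 2.4,'' i.e.\ transporting convergence through the homeomorphism of Lemma \ref{lem2} and reading off both conditions from the neighborhood basis in $\mathcal{F}_c$, which is precisely what you spell out (including the boundary/stabilizer discussion the paper leaves implicit). One small correction: the one-point-per-$W$-orbit fact you invoke is Proposition \ref{prop1}, not Lemma \ref{lem1} (which concerns adjoint $K$-orbits), and $K_H$ need only contain, not strictly contain, the groups $K_{H_n}$.
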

\begin{proof} Applying the same arguments as in the proof of Proposition 2.4.

\end{proof}
Of course $\widehat{K}$ has the discrete topology.\vspace{0,2cm}\\
Let $C^*(G_0)$ denote the full $C^*$-algebra of $G_0.$
We denote by $C_0(\widehat{K})$ the Banach algebra of all operator fields
$$F:\widehat{K}\longrightarrow\bigcup_{\pi\in\widehat{K}}\mathcal{B}(\mathcal{H}_\pi),$$ such that
$F(\pi)\in\mathcal{B}(\mathcal{H}_\pi)$ for each irreducible unitary representation $\pi$ of $K$ and such that
$\lim_{\pi\longrightarrow\infty}\|F(\pi)\|_{op}=0.$ This algebra is equipped with the norm
$\|F\|_\infty=\sup_{\pi\in\widehat{K}}\|F(\pi)\|_{op},\,F\in C_0(\widehat{K}).$
Its well-known that the $C^*$-algebra $C^*(K)$ of $K$ is isomorphic to $C_0(\widehat{K})$ (see, [1]).\\
In the sequel, we describe the elements of the image of the Fourier transform of $C^*(G_0)$ inside the big algebra
$\ell^\infty(\widehat{G_0}).$
\begin{definition}\label{def1}
Let $(\mu,H)\in \Gamma_0\sqcup\Gamma_1$ and let $\rho_\mu\in\widehat{K_H}.$ We define the representation $\pi_{\mu,0}$ of $G_0$ by $$\pi_{\mu,0}:=Ind_{K_H\ltimes\mathfrak{p}}^{G_0}(\rho_\mu\otimes1)$$
and let $\mathcal{H}_{\mu,0}$ be its Hilbert space. By the Frobenius reciprocity, we obtain $$\pi_{\mu,0}=\widehat{\bigoplus}_{\lambda\geq\mu}\tau_\lambda$$
where $\lambda\geq\mu$ means that $\tau_\lambda\in Ind_{K_H}^K(\rho_\mu).$
\end{definition}
\section{\bf{The $C^*$-algebra of the group $G_0$.}}\label{sec:3}
\subsection{\bf The Fourier transform.}
Now, let $f\in L^1(G_0), h\in K$ and $\Psi\in\mathcal{H}_{\mu,H},$
then we can give the expression of the operator $\pi_{(\mu,H)}(f)$ by the following equality\begin{eqnarray*}
                                \pi_{(\mu,H)}(f)\Psi(h) &=& \int_{G_0}f(k,X)\pi_{(\mu,H)}(k,X)\Psi(h)dkdX \\
     &=& \int_K\int_\mathfrak{p}f(k,X)\Psi(k^{-1}h)e^{i\langle Ad(h^{-1})X,H\rangle}dkdX \\
     &=&\int_K\widehat{f}^2(hk^{-1},Ad(h)H)\Psi(k)dk  \\
      &=&\int_{K/K_H}\int_{K_H}\widehat{f}^2(hs^{-1}k^{-1},Ad(h)H)\rho_\mu(s^{-1})(\Psi(k))dsdk.
     \end{eqnarray*} Then
             \begin{eqnarray}\label{expoppimuH}
      \pi_{(\mu,H)}(f)\Psi(h)&=&\int_{K/K_H}f_{\mu,H}(h,k)(\Psi(k))dk,
   \end{eqnarray}
here $\widehat f^{2}$ denotes the partial Fourier transform of $f$ in the second variable and where \begin{eqnarray*}
                                                              f_{\mu,H}:K\times K &\longrightarrow& \mathcal{B}(\mathcal{H}_{\rho_\mu}) \\
        (h,k) &\longmapsto& \int_{K_H}\widehat f^2 (hsk^{-1},Ad(h)H)\rho_\mu(s)ds.
                                                                                                 \end{eqnarray*}

\begin{definition}\label{def2}
 Let
 \begin{eqnarray*}
  C_{0,2}(G_0):=\left\{f\in L^1(G_0)|\ \widehat f^2\in C_0(K\times\p)\right\}.
 \end{eqnarray*}
This space $C_{0,2}(G_0)$ is dense in $L^1(G_0)$ and hence also in $C^*(G_0).$
\end{definition}

\begin{definition}\label{def3}
For each $f\in C^*(G_0),$ the Fourier transform $\mathcal{F}(f)$ of $f$ is an isometric homomorphism on $C^*(G_0)$ into $\ell^\infty(\widehat{G_0})$ which is given by
\begin{eqnarray*}
  \mathcal{F}(f)(\mu,H) &=& \pi_{(\mu,H)}(f)\in\mathcal{B}(\mathcal{H}_{\mu,H}),\,\, (\mu,H)\in \Gamma_0\sqcup\Gamma_1, \\
  \mathcal{F}(f)(\lambda) &=& \tau_\lambda(f)\in\mathcal{B}(\mathcal{H}_{\lambda}) \forall \lambda\in\Gamma_2.
\end{eqnarray*}
\end{definition}
In the following Proposition, we give a description of elements $\mathcal{F}(f)$ for each $f\in C^*(G_0).$
\begin{proposition}\label{prop4}
Let $f\in C^*(G_0).$ Then we have:
\begin{enumerate}
  \item For every $(\mu,H)\in\Gamma_0\sqcup\Gamma_1,$ $\F(\mu,H)$ is a compact operator on $\mathcal{H}_{\mu,H}.$

  \item The mapping \begin{eqnarray*}
                      \widehat{G_0}  &\longrightarrow&\mathcal{B}(\mathcal{H}_{\pi_\gamma}) \\
                      \gamma&\longmapsto& \mathcal{F}(f)(\gamma)
                    \end{eqnarray*}
                    are norm continuous on the difference sets $\Gamma_i,\ i=0,\ 1,\ 2$..
  \item $\underset{|\mu|\longrightarrow+\infty}{\lim}\|\mathcal{F}(f)(\mu,H)\|_{op}=0$ .
  \item $\underset{H\longrightarrow 0}{\lim}\|\pi_{(\mu,H)}(f)-\pi_{\mu,0}(f)\|_{op}=0.$
\end{enumerate}
\end{proposition}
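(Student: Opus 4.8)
The plan is to reduce everything to the dense subalgebra $C_{0,2}(G_0)$ of Definition \ref{def2} and then exploit the explicit integral-kernel formula (\ref{expoppimuH}). Since $\F$ is norm-decreasing (indeed $\|\pi(f)\|_{op}\le\|f\|_{C^*(G_0)}$ for every $\pi\in\widehat{G_0}$, by the very definition of the $C^*$-norm) and since each of the four assertions is stable under uniform operator-norm limits — compact operators form a norm-closed subspace, a uniform limit of norm-continuous operator fields is norm continuous, and the vanishing conditions in (3) and (4) persist under uniform convergence — it suffices to verify (1)--(4) on a convenient dense subset. I would take $\mathcal{S}\subset C_{0,2}(G_0)$ to consist of those $f$ with $\widehat{f}^2\in C_c^\infty(K\times\p)$; this is dense in $L^1(G_0)$, hence in $C^*(G_0)$. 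For such $f$ the operator $\pi_{(\mu,H)}(f)$ is, by (\ref{expoppimuH}), the integral operator on $\mathcal{H}_{\mu,H}\cong L^2(K/K_H,\mathcal{H}_{\rho_\mu})$ with $\mathcal{B}(\mathcal{H}_{\rho_\mu})$-valued kernel $f_{\mu,H}(h,k)=\int_{K_H}\widehat{f}^2(hsk^{-1},Ad(h)H)\rho_\mu(s)\,ds$, and this is the object I would analyse throughout.

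For (1), note that $K/K_H$ is compact and $\mathcal{H}_{\rho_\mu}$ is finite dimensional, so for $f\in\mathcal{S}$ the kernel $f_{\mu,H}$ is continuous on the compact space $(K/K_H)\times(K/K_H)$; hence $\pi_{(\mu,H)}(f)$ is Hilbert--Schmidt, in particular compact, and compactness transfers to all of $C^*(G_0)$ by density. For (4), fix $(\mu,H)$; the operators $\pi_{(\mu,H)}(f)$ and $\pi_{\mu,0}(f)$ of Definition \ref{def1} act on the same space, with the same inducing subgroup $K_H$, and their difference is the integral operator with kernel $\int_{K_H}\big(\widehat{f}^2(hsk^{-1},Ad(h)H)-\widehat{f}^2(hsk^{-1},0)\big)\rho_\mu(s)\,ds$. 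As $H\to0$ within $C^+(\mathfrak{a})$ one has $Ad(h)H\to0$ uniformly in $h$ (compactness of $K$), so by uniform continuity of $\widehat{f}^2$ the integrand tends to $0$ uniformly in $(h,s,k)$; using $\|\rho_\mu(s)\|_{op}=1$ and the Schur bound $\|\pi_{(\mu,H)}(f)\|_{op}\le\mathrm{vol}(K/K_H)\,\sup_{h,k}\|f_{\mu,H}(h,k)\|_{op}$, the operator-norm difference tends to $0$. The same kernel-continuity argument, now in the $H$-variable with $\mu$ held fixed, yields norm continuity of $(\mu,H)\mapsto\pi_{(\mu,H)}(f)$ along each stratum on which $K_H$ is constant; together with the discreteness of $\widehat{M}$ and of $\Gamma_2=\widehat{K}$ and the convergence criteria of Propositions \ref{prop2} and \ref{prop3}, this gives assertion (2) on each of $\Gamma_0,\Gamma_1,\Gamma_2$.

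The main obstacle is assertion (3), the decay as $|\mu|\to+\infty$, because the Hilbert--Schmidt estimate is useless here: $\|f_{\mu,H}(h,k)\|_{HS}$ carries a factor $\sqrt{\dim\mathcal{H}_{\rho_\mu}}$ that grows with $\mu$. I would instead extract decay directly from the matrix-valued coefficient $f_{\mu,H}(h,k)$ by integrating by parts on the compact group $K_H$. Let $\Omega$ be the Casimir element of $K_H$: it is bi-invariant and symmetric, it acts on $\rho_\mu$ as a scalar $c(\mu)$, and $c(\mu)\to+\infty$ as $|\mu|\to\infty$. Writing $\rho_\mu(s)=c(\mu)^{-1}\Omega\rho_\mu(s)$ and transferring $\Omega$ onto the smooth function $\phi_{h,k}(s)=\widehat{f}^2(hsk^{-1},Ad(h)H)$ by self-adjointness, iterating $N$ times gives $f_{\mu,H}(h,k)=c(\mu)^{-N}\int_{K_H}(\Omega^N\phi_{h,k})(s)\rho_\mu(s)\,ds$, whence $\sup_{h,k}\|f_{\mu,H}(h,k)\|_{op}\le C_N\,c(\mu)^{-N}$, the constant $C_N$ being controlled by finitely many sup-norms of derivatives of $\widehat{f}^2$ (finite since $\widehat{f}^2\in C_c^\infty$) uniformly in $(h,k)$ and in $H$ on compacta. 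The Schur bound then forces $\|\pi_{(\mu,H)}(f)\|_{op}\to0$ as $|\mu|\to\infty$ for $f\in\mathcal{S}$, and density upgrades this to all of $C^*(G_0)$. Assembling (1)--(4) completes the proof.
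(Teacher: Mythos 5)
Your overall strategy is the paper's: pass to a dense set of $f$ with good partial Fourier transform $\widehat f^2$, and control everything through the operator-valued kernel $f_{\mu,H}(h,k)=\int_{K_H}\widehat f^2(hsk^{-1},Ad(h)H)\rho_\mu(s)\,ds$ of (\ref{expoppimuH}). Your proofs of (1) and (4) coincide in substance with the paper's (Hilbert--Schmidt bound, respectively Schur bound plus uniform continuity of $\widehat f^2$ near $0$). Your proof of (3) is a genuinely different and correct route: the paper keeps $f\in C_{0,2}(G_0)$ of Definition \ref{def2} and deduces the decay from the Plancherel theorem of the compact group $K_H$ applied to $\varphi_H^{h,k}(s)=\widehat f^2(hsk^{-1},Ad(h)H)$ (convergence of $\sum_\mu d_\mu\|\rho_\mu(\varphi_H^{h,k})\|_{H.S}^2$ forces $\|f_{\mu,H}(h,k)\|_{H.S}\to 0$), whereas you shrink the dense set so that $\widehat f^2$ is smooth and integrate the Casimir of $K_H$ by parts. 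Your version costs an additional (standard but unproved) density claim for $\{f:\widehat f^2\in C_c^\infty(K\times\mathfrak{p})\}$ and some smoothness bookkeeping, but it buys a quantitative rate $O(c(\mu)^{-N})$ uniform in $H$; the paper's version is softer and needs only continuity of $\widehat f^2$.

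The genuine gap is in assertion (2) on $\Gamma_1$. You establish norm continuity of $(\mu,H)\mapsto\pi_{(\mu,H)}(f)$ only \emph{along each stratum on which $K_H$ is constant}, and then assert that the convergence criteria of Propositions \ref{prop2} and \ref{prop3} reduce the general case to this one. They do not. By Proposition \ref{prop3}(1), a sequence $\pi_{(\mu^n,H_n)}$ in $\Gamma_1$ converges to $\pi_{(\mu,H)}\in\Gamma_1$ as soon as $H_n\to H$ and $[\rho_\mu\big|_{K_{H_n}}:\rho_{\mu^n}]>0$ eventually; nothing forces $K_{H_n}=K_H$ or $\mu^n=\mu$. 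When the rank of $(G,K)$ is at least $3$, the set $\partial(C^+(\mathfrak{a}))\setminus\{0\}$ contains faces of different dimensions, and a nonzero point $H$ on a codimension-two face is a limit of points $H_n$ on codimension-one faces; then $K_{H_n}\subsetneq K_H$, the inducing data and the Hilbert spaces $\mathcal{H}_{\mu^n,H_n}$, $\mathcal{H}_{\mu,H}$ genuinely differ, and your fixed-$\mu$, fixed-$K_H$ kernel-continuity argument says nothing about this case. (In rank one $\Gamma_1$ is empty, and in rank two the stabilizer is locally constant on $\Gamma_1$, so your argument is complete only in those low-rank situations.) This cross-strata case is exactly where the paper's proof of (2) spends its effort: it bounds $\|f_{\mu,H_n}(h,k)-f_{\mu,H}(h,k)\|_{op}$ by a term measuring the change $H_n\to H$ plus a term $\int\vert\widehat f^2(hsk^{-1},Ad(h)H)\vert\,\vert\mathbf{1}_{K_{H_n}}(s)-\mathbf{1}_{K_H}(s)\vert\,ds$ measuring the change of stabilizer. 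To close the gap you must (i) say what norm continuity means at a point of $\Gamma_1$ approached from strata with smaller stabilizer, i.e.\ how $\pi_{(\mu^n,H_n)}(f)$ and $\pi_{(\mu,H)}(f)$ are to be compared when $[\rho_\mu\big|_{K_{H_n}}:\rho_{\mu^n}]>0$ but $\mu^n\neq\mu$, and (ii) produce an estimate for that comparison, which your current kernel argument does not provide.
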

\begin{proof}
 Let $f\in C_{0,2}(G_0)$
 \begin{enumerate}
  \item We have the following bound for the kernel functions $f_{\mu,H}$ where $(\mu,H)\in\Gamma_0\cup\Gamma_1$
  \begin{eqnarray}\label{compactoper}
  \nn \left|\left|f_{\mu,H}(h,k)\right|\right|_{\text{op}}&\leq&\left|\left|\int_{K_H}\widehat f^2(hsk^{-1},Ad(h)H)\rho_\mu(s)ds\right|\right|_{\text{op}}\\
  \nn&\leq&\int_{K_H}|\widehat f^2(hsk^{-1},Ad(h)H)|\left|\left|\rho_\mu(s)\right|\right|_{\text{op}}ds\\
  \nn&\leq&\int_{K_H}|\widehat f^2(hsk^{-1},Ad(h)H)|ds\\
  &\leq&\no{\widehat f^2}_{\infty}
  \end{eqnarray}
Since the dimension of the representation $\rho_\mu$ of $K_H$ is finite (denote by $d_\mu$ ), it follows that the norme $\noop{\cdot}$ and $\no{\cdot}_{H.S}$
on the space $\H_{\rho_\mu}$ are equivalent and
$$\no{f_{\mu,H}(h,k)}_{H.S}\leq\sqrt{d_\mu}\noop{f_{\mu,H}(h,k)},\ \ h,k\in K.$$
Then by $(\ref{compactoper})$ the operator $\pi_{(\mu,H)}(f)$ is Hilbert-Schmidt and its Hilbert-Schmidt norm is given by
\begin{eqnarray}\label{noHSpimuH}
 \nn\no{\pi_{(\mu,H)}(f)}_{H.S}^2&=&\int_{K/K_H}\int_{K/K_H}\no{f_{\mu,H}(h,k)}_{H.S}^2dhdk\\
 &\leq& d_\mu\no{\widehat f^2}_\infty^2
\end{eqnarray}
\item Let $(\mu,H_n)_{n\in\N}$ be a sequence converges  to $(\mu,H)$ in $\Gamma_0$. Using now $(\ref{expoppimuH})$, we obtain
\begin{eqnarray*}
 & &\no{\pi_{\mu,H_n}(f)-\pi_{(\mu,H)}(f)}_{H.S}^2\\
 &=&\int_{K/K_H}\int_{K/K_H}\no{f_{\mu,H_n}(h,k)-f_{\mu,H}(h,k)}_{H.S}^2dhdk\\
 &\leq&d_\mu\int_{K/K_H}\int_{K/K_H}\noop{f_{\mu,H_n}(h,k)-f_{\mu,H}(h,k)}^2dhdk\\
 &\leq&d_\mu\int_{K/K_H}\int_{K/K_H}\left(\int_K|\widehat f^2(hsk^{-1},Ad(h)H_n)-\widehat{f}^2(hsk^{-1},Ad(h)H)|^2ds\right)dhdk,
\end{eqnarray*}
For $f\in C_{0,2}(G_0)$ we have
\begin{eqnarray*}
 \underset{n\to\infty}{\lim}|\widehat f^2(hsk^{-1},Ad(h)H_n)-\widehat{f}^2(hsk^{-1},Ad(h)H)|=0.
\end{eqnarray*}
Hence
\begin{eqnarray*}
 \underset{n\to\infty}{\lim}\no{\pi_{(\mu,H_n)}(f)-\pi_{(\mu,H)}(f)}_{H.S}=0.
\end{eqnarray*}
Let now $(\mu,H_n)_{n\in\N}$ be a sequence converges  to $(\mu,H)$ in $\Gamma_1$. We have for  $\xi\in L^2(K/K_H,\mu)$
\begin{eqnarray*}
 \no{(\pi_{(\mu,H_n)}(f)-\pi_{(\mu,H)}(f))\xi}_2^2&=&\int_{K}\left|\left|\int_K\left(f_{\mu,H_n}(h,k)-f_{\mu,H}(h,k)\right)(\xi(k))dk\right|\right|_{\H_\mu}^2dk\\
 &\leq&\underset{k\in K}{\sup}\left(\int_K\noop{f_{\mu,H_n}(h,k)-f_{\mu,H}(h,k)}dh\right)^{\frac{1}{2}}\\
 & &\times\underset{h\in K}{\sup}\left(\int_K\noop{f_{\mu,H_n}(h,k)-f_{\mu,H}(h,k)}dk\right)^{\frac{1}{2}}\no{\xi}_2,
\end{eqnarray*}
since the function $(k,h,s,H)\longmapsto|\widehat f^2 (hsk^{-1},Ad(h)H_n)-\widehat f^2 (hsk^{-1},Ad(h)H)|$ converges uniformly to $0$
as $n$ tends to $\infty$, since
\begin{eqnarray*}
 & &\noop{f_{\mu,H_n}(h,k)-f_{\mu,H}(h,k)}\\
 &=&\noop{\int_{K_{H_n}}\widehat f^2 (hsk^{-1},Ad(h)H_n)\rho_\mu(s)ds-\int_{K_H}\widehat f^2 (hsk^{-1},Ad(h)H)\rho_\mu(s)ds}\\
 &\leq&\noop{\int_{K_{H_n}}\widehat f^2 (hsk^{-1},Ad(h)H_n)\rho_\mu(s)ds-\int_{K_{H_n}}\widehat f^2 (hsk^{-1},Ad(h)H)\rho_\mu(s)ds}\\
 &+&\noop{\int_{K_{H_n}}\widehat f^2 (hsk^{-1},Ad(h)H)\rho_\mu(s)ds-\int_{K_{H}}\widehat f^2 (hsk^{-1},Ad(h)H)\rho_\mu(s)ds}\\
 &\leq&\int_{K_{H}}|\widehat f^2 (hsk^{-1},Ad(h)H_n)-\widehat f^2 (hsk^{-1},Ad(h)H)|ds\\
 &+&\int_{K}|\widehat f^2 (hsk^{-1},Ad(h)H)\vert |{\bf1}_{K_{H_n}}(s)-{\bf1}_{K_H}(s)|ds
\end{eqnarray*}
we see therefore that
$$\underset{n\to\infty}{\lim}\noop{\pi_{(\mu,H_n)}(f)-\pi_{(\mu,H)}(f)}=0$$
uniformly in $\mu$.
\item It remains for us to prove that $\underset{|\mu|\to\infty}{\lim}\no{\pi_{(\mu,H)}(f)}_{H.S}=0.$ This is equivalent to show that
$$\underset{|\mu|\to\infty}{\lim}\no{f_{\mu,H}(k,h)}_{H.S}=0\ \text{ for all } k,h\in K.$$
Recall that
$$f_{\mu,H}(h,k)= \int_{K_H}\widehat f^2 (hsk^{-1},Ad(h)H)\rho_\mu(s)ds.$$
Let
\begin{eqnarray*}
 \varphi_H^{h,k}(s)=\widehat f^2 (hsk^{-1},Ad(h)H), \ \ s\in K_H.
\end{eqnarray*}
So,
$$f_{\mu,H}(h,k)=\rho_\mu(\varphi_H^{h,k}).$$
Using now the Plancherel formula, we get
\begin{eqnarray*}
 \no{\varphi_H^{h,k}}_{L^2(K_H)}^2=\underset{\mu\in\widehat{K_H}}{\sum}d_\mu\no{\rho_\mu(\varphi_H^{h,k})}_{H.S}^2.
\end{eqnarray*}
Therefore
$$\underset{|\mu|\to\infty}{\lim}\no{f_{\mu,H}(h,k)}_{H.S}=\underset{|\mu|\to\infty}{\lim}\no{\rho_\mu(\varphi_H^{h,k})}_{H.S}=0.$$
\item From $(\ref{expoppimuH})$ we have for $\mu\in\widehat{K_H}$ and $\xi\in L^2(K/K_H,\mu)$
\begin{eqnarray*}
 \no{(\pi_{(\mu,H)}(f)-\pi_{\mu,0}(f))\xi}_2^2&=&\int_{K}\left|\left|\int_K\left(f_{\mu,H}(h,k)-f_{\mu,0}(h,k)\right)(\xi(k))dk\right|\right|_{\H_\mu}^2dk\\
 &\leq&\underset{k\in K}{\sup}\left(\int_K\noop{f_{\mu,H}(h,k)-f_{\mu,0}(h,k)}dh\right)^{\frac{1}{2}}\\
 & &\times\underset{h\in K}{\sup}\left(\int_K\noop{f_{\mu,H}(h,k)-f_{\mu,0}(h,k)}dk\right)^{\frac{1}{2}}\no{\xi}_2,
\end{eqnarray*}
where
\begin{eqnarray*}
 \left.\begin{array}{cccc}
f_{\mu,0}: & K\times K& \longrightarrow& \B(H_\mu)\\
&(h,k)&\mapsto&\int_{K_H}\widehat f^2 (hsk^{-1},0)\rho_\mu(s)ds\end{array}\right.
\end{eqnarray*}
Since the function $(k,h,s,H)\longrightarrow|\widehat f^2 (hsk^{-1},Ad(h)H)-\widehat f^2 (hsk^{-1},0)|$ converges uniformly to $0$ as $H$ tends to $0$ since
\begin{eqnarray*}
 \noop{f_{\mu,H}(h,k)-f_{\mu,0}(h,k)}&=&\noop{\int_{K_H}(\widehat f^2 (hsk^{-1},Ad(h)H)-\widehat f^2 (hsk^{-1},0))\rho_\mu(s)ds}\\
 &\leq&\int_{K_H}|\widehat f^2 (hsk^{-1},Ad(h)H)-\widehat f^2 (hsk^{-1},0)|ds
\end{eqnarray*}
we see therefore that
$$\underset{H\to0}{\lim}\noop{\pi_{(\mu,H)}(f)-\pi_{\mu,0}(f)}=0$$
uniformly in $\mu$.
 \end{enumerate}
 The proposition follows now from the density of $C_{0,2}(G_0)$ in $C^*(G_0)$.
\end{proof}
\subsection{\bf{A $C^*$-condition.}}
\begin{definition}\label{def4}
 For an operator field $F\in\ell^\infty(\widehat{G_0}),$ let
 $$F(\mu,0):=\underset{\la\geq\mu}{\bigoplus}F(\la)\in\B\left(\underset{\la\geq\mu}{\bigoplus}\H_\mu\right).$$
 We have
 $$\noop{F(\mu,0)}=\underset{\la\geq\mu}{\sup}\noop{F(\la)}.$$
\end{definition}
\begin{definition}\label{def5}
 Let $\D$ be the familly consisting of all operator fields $F\in\ell^\infty(\widehat{G_0})$ satisfying the following conditions:
 \begin{enumerate}
  \item $F(\mu,H)$ is a compact operator on $\H_{\mu,H}$ for every $(\mu,H)\in\Gamma_0\sqcup\Gamma_1.$
  \item The mapping $\widehat{G_0}\longrightarrow\B(\H_{\mu,H}):(\mu,H)\longmapsto F(\mu,H)$
  are norm continuous on the difference sets $\Gamma_i,\ i=0,\ 1,\ 2$.
  \item $\underset{|\mu|\to\infty}{\lim}\noop{F(\mu,H)}=0.$
  \item $\underset{H\to0}{\lim}\noop{F(\mu,H)-F(\mu,0)}=0$ uniformly in $\mu\in K_H.$
  \item $\underset{|\la|\to\infty}{\lim}\noop{F(\la)}=0.$
 \end{enumerate}

\end{definition}
\begin{definition}\label{def6}
 Let
\begin{eqnarray*}
 \D_0=\{F\in\D;\ F(\la)=0 \text{for all }\la\in\widehat K\}.
\end{eqnarray*}

\end{definition}
\begin{remark}\label{rem1}
 According to Proposition $4.7$ in \cite{Lud-Ell-Abd} the unitary dual $\widehat{\D_0}$ is in bijection with the parameter space $\Gamma_0\cup\Gamma_1.$

\end{remark}

\begin{theorem}\label{the1}
 $\D$ is a $C^*$-algebra for the norm $\noop{\cdot},$ which is isomorphic to the $C^*$-algebra of Cartan motion groups under the Fourier transform.
\end{theorem}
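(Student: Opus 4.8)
The plan is to prove the statement in two stages: first that $\D$ is a norm-closed $*$-subalgebra of the $C^*$-algebra $\ell^\infty(\widehat{G_0})$ (hence itself a $C^*$-algebra for $\noop{\cdot}$), and then that the Fourier transform $\mathcal{F}$ restricts to an isometric $*$-isomorphism of $C^*(G_0)$ onto $\D$. For the first stage I would verify that each of the five defining conditions of Definition \ref{def5} is stable under the algebra operations and under uniform limits. Conditions (3) and (5) are limit conditions that plainly pass to sums, scalar multiples, adjoints, and products (using submultiplicativity of $\noop{\cdot}$), and they survive uniform limits. Condition (1) is stable under products because the compact operators form a two-sided ideal in each $\B(\H_{\mu,H})$, and under uniform limits because the compacts are norm-closed. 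Condition (2) is preserved since on each difference set $\Gamma_i$ the maps $F\mapsto F^{*}$ and $(F,G)\mapsto FG$ are norm-continuous and a uniform limit of norm-continuous fields is norm-continuous. For condition (4), the point is that $F\mapsto F(\mu,0)=\bigoplus_{\la\ge\mu}F(\la)$ is a $*$-homomorphism (Definition \ref{def4}), so $\noop{(FG)(\mu,H)-(FG)(\mu,0)}$ is dominated by the corresponding quantities for $F$ and $G$; again uniform limits are harmless. Hence $\D$ is a $C^*$-algebra.

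Next, Proposition \ref{prop4} shows precisely that for every $f\in C^{*}(G_{0})$ the field $\mathcal{F}(f)$ satisfies conditions (1)--(4), while condition (5), namely $\lim_{|\la|\to\infty}\noop{\tau_\la(f)}=0$, is exactly the statement that the restriction of $f$ to $\Gamma_2=\widehat K$ lies in $C_0(\widehat K)\cong C^{*}(K)$. Thus $\mathcal{F}(C^{*}(G_{0}))\subseteq\D$, and since $\mathcal{F}$ is isometric it is injective with norm-closed image; write $\mathcal{A}:=\mathcal{F}(C^{*}(G_{0}))$, a $C^{*}$-subalgebra of $\D$. It remains to prove $\mathcal{A}=\D$, for which I would use the restriction to $\Gamma_2$.

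On the group side, the quotient $G_{0}\to G_{0}/\mathfrak p\cong K$ induces a surjection $q:C^{*}(G_{0})\to C^{*}(K)$, $q(f)=(\tau_\la(f))_\la$, with kernel $I=\{f:\tau_\la(f)=0\ \forall\la\}$ the ideal attached to the representations nontrivial on $\mathfrak p$, whose spectrum is $\Gamma_0\sqcup\Gamma_1$. On the algebra side, restriction of operator fields gives $r:\D\to C_0(\widehat K)$ with kernel $\D_0$ (Definition \ref{def6}), and by Remark \ref{rem1} one has $\widehat{\D_0}\simeq\Gamma_0\cup\Gamma_1$. These fit into a commutative diagram of short exact sequences whose right-hand vertical arrow is the classical isomorphism $C^{*}(K)\cong C_0(\widehat K)$ and whose middle arrow is $\mathcal{F}$; here surjectivity of $q$ is standard, and surjectivity of $r$ follows since $r\circ\mathcal{F}$ agrees with the surjection $q$ up to that isomorphism. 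By the short five lemma it then suffices to show that $\mathcal{F}$ maps $I$ onto $\D_0$. Since $\mathcal{F}|_I$ is isometric, its image is a closed $*$-subalgebra of $\D_0$, and the identity $\mathcal{F}(f)(\mu,H)=\pi_{(\mu,H)}(f)$ shows that $\mathcal{F}|_I$ induces the identity on the common parameter space $\Gamma_0\sqcup\Gamma_1$.

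The hard part is precisely this surjectivity $\mathcal{F}(I)=\D_0$, and I would prove it by a density argument on the fibres. The tool is the explicit kernel formula $\pi_{(\mu,H)}(f)\Psi(h)=\int_{K/K_H}f_{\mu,H}(h,k)\Psi(k)\,dk$ from $(\ref{expoppimuH})$, together with the fact that, as $f$ ranges over $C_{0,2}(G_{0})$, the partial Fourier transform $\widehat f^2$ ranges over a dense subspace of $C_0(K\times\mathfrak p)$. Given a field $F\in\D_0$, one uses norm-continuity on $\Gamma_0$ and $\Gamma_1$ together with the boundary-matching condition (4) as $H\to0$ to construct, by a partition-of-unity and Stone--Weierstrass argument on $K\times\mathfrak p$, kernels $\widehat f^2$ whose associated operator fields approximate $F$ uniformly; the compactness condition (1), via the Hilbert--Schmidt estimate $(\ref{noHSpimuH})$, is what guarantees these fibrewise approximations stay within reach. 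Because $\mathcal{F}(I)$ is closed, the uniform approximation forces $\mathcal{F}(I)=\D_0$, and the diagram chase yields $\mathcal{A}=\D$, so $\mathcal{F}:C^{*}(G_{0})\to\D$ is an isometric $*$-isomorphism. This fibrewise construction, and the verification that the behaviour encoded in conditions (2) and (4) can be matched exactly at the boundary $H\to0$, is where the genuine work lies and parallels the method of \cite{Lud-Ell-Abd} for the Euclidean motion group.
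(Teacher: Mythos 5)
Your first stage ($\D$ is a norm-closed involutive subalgebra of $\ell^\infty(\widehat{G_0})$, hence a $C^*$-algebra) is correct and is essentially the paper's own argument, including the $3\varepsilon$-treatment of condition (4). Your exact-sequence framework is also the structure the paper exploits: the map $F\mapsto(F(\la))_{\la\in\widehat K}$ with kernel $\D_0$ and image $C_0(\widehat K)\cong C^*(K)$, surjective because $\D\supseteq\F(C^*(G_0))$. The divergence is in how the two proofs finish. The paper never proves surjectivity onto $\D_0$ directly: it shows $\widehat\D=\widehat{G_0}$ as sets --- combining $\widehat{\D_0}=\Gamma_0\cup\Gamma_1$ (Remark \ref{rem1}, quoted from Proposition 4.7 of \cite{Lud-Ell-Abd}) with $\widehat{\D/\D_0}=\widehat K$ --- and then invokes the Stone--Weierstrass theorem for (type I) $C^*$-algebras applied to the subalgebra $\F(C^*(G_0))\subseteq\D$; that abstract theorem requires no construction of preimages at all. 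Your five-lemma reduction to $\F(I)=\D_0$ is legitimate in itself, but it leaves you with precisely the statement that the paper's strategy is designed to avoid proving by hand.

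That is where your proposal has a genuine gap: the claimed proof of $\F(I)=\D_0$ is not an argument but an announcement. Classical Stone--Weierstrass on $K\times\p$ gives density of $\{\widehat f^2: f\in C_{0,2}(G_0)\}$ in $C_0(K\times\p)$, hence density of $\F(I)$ inside the set of operator fields which \emph{already arise from kernels} via (\ref{expoppimuH}); it says nothing about why an abstract field $F\in\D_0$, whose fibres are prescribed independently subject only to conditions (1)--(5), can be uniformly approximated by such fields. The specific obstruction your sketch never addresses is the coherence across $\mu$: for fixed $H$, the entire family $\{\pi_{(\mu,H)}(f)\}_{\mu\in\widehat{K_H}}$ is determined by a single function, since $f_{\mu,H}(h,k)=\rho_\mu(\varphi_H^{h,k})$ --- the operators at different $\mu$ are the $K_H$-Fourier coefficients of one kernel. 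To approximate a given $F$ you would have to reconstruct a single function on $K\times\p$ from the whole family $\{F(\mu,\cdot)\}_{\mu}$ (Fourier inversion on $K_H$, a series of the form $\sum_\mu d_\mu\,\mathrm{tr}(\rho_\mu(\cdot)^*F_\mu)$), and show that conditions (1)--(5) provide enough decay in $\mu$ and enough compatibility between $\Gamma_0$, $\Gamma_1$ and the limit $H\to0$ for that series to yield an element of $C_0(K\times\p)$ --- or at least justify truncation to finitely many $\mu$, which already requires a uniformity in condition (3) that you never establish. ``Partition of unity plus Stone--Weierstrass'' does not engage with any of this. Note finally that the Stone--Weierstrass theorem the paper uses is the $C^*$-algebraic one (a subalgebra separating $\widehat\D\cup\{0\}$ must be everything), not the function-space version you invoke; conflating the two is exactly what conceals the missing step.
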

\begin{proof}
 First we show that $\D$ is a $C^*$-algebra. It is clear that $\D$ is an involutive sub-algebra of $\ell^{\infty}(\widehat{G_0})$. The conditions
 $(1),\ (2),\ (3)$ et $(5)$ are evidently true for every $F\in\D$. For the condition $(4)$, let $(F_k)_k\subset\D$ such that
 $\underset{k\to\infty}{\lim}\no{F_k-F}_\infty=0.$ We have then $\underset{H\to 0}{\lim}\noop{F_k(\mu,H)-F_k(\mu,0)}=0$ uniformly in $\mu$, since
 for any $\varepsilon>0$ there exists $k_0$ such that such that $\no{F-F_k}\infty<\varepsilon$ for any $k\geq k_0.$ Then
 $\noop{F_{k_0}(\mu,H)-F_{k_0}(\mu,0)}<\varepsilon$ uniformly in $\mu$. Hence
 \begin{eqnarray*}
 & &\noop{F(\mu,H)-F(\mu,0)}\\
 &\leq&\noop{F(\mu,H)-F_{k_0}(\mu,H)}+\noop{F_{k_0}(\mu,H)-F_{k_0}(\mu,0)}+\noop{F_{k_0}(\mu,0)-F(\mu,0)}\\
 &\leq&3\varepsilon.
 \end{eqnarray*}
Then $F\in\D$.

Second, we have the quotient space $\D/\D_0$ is isomorphic to $C^*(K)$, indeed, let
\begin{eqnarray*}
 \left.\begin{array}{cccc}
 & & &\\
\de: & \D& \longrightarrow& C^*(K)\\
&F&\mapsto&\left(F(\la)\right)_{\la\in \widehat K}\end{array}\right.
\end{eqnarray*}
We have $\ker{\de}=\D_0.$ Since $\D$ contains the algebra $\F(C^*(G_0))$ the image of $\de$ contains
the image of the Fourier transform of $C^*(K)$. Then $\de$ is surjective. In addition $\D$ is a type $I$ algebra
and $\widehat{G_0}$ is in bijection with $\widehat\D$, indeed we have $\widehat{G_0}\subset\widehat\D$.
Moreover for every for every irreducible representation $\pi\in\widehat\D$, we have either $\pi\res{\D_0}\ne\{0\}$, and then $\pi=\pi_{(\mu,H)}$ since
$\D_0$ is closed ideal of $\D$ and $\widehat{\D_0}=\Gamma_0\bigcup\Gamma_1$ or $\pi\in\widehat{\D/\D_0}$ and $\pi\in\Gamma_2$. Hence $\pi\in\widehat{G_0}$
and $\widehat{G_0}=\widehat\D$ as sets.

Thanks to Stone-Weierstrass's theorem we have the identity $\D=\F(C^*(G_0)).$
\end{proof}

\end{document}